\numberwithin{equation}{section}
\def\and{\quad{\rm and}\quad}
\newtheorem{prop}{Proposition}[section]
\newtheorem{theo}[prop]{Theorem}
\newtheorem{rema}[prop]{Remark}
\newtheorem{conj}[prop]{Conjecture}
\def\beq{\begin{equation}}
\def\eeq{\end{equation}}
\title[Motives of moduli spaces of rank 2 vector bundles on curves]{Remarks on motives of moduli spaces \\ of rank 2 vector bundles on curves}
\author{Kyoung-Seog Lee}
\address{Center for Geometry and Physics, Institute for Basic Science (IBS), Pohang 37673, Republic of Korea}
\email{kyoungseog02@gmail.com}
\thanks{KSL was partially supported by IBS-R003-Y1.}
\begin{document}

\begin{abstract}
Let $C$ be an algebraic curve of genus $g \geq 2$ and $M_L$ be the moduli space of rank 2 stable vector bundles on $C$ whose determinants are isomorphic to a fixed line bundle $L$ of degree 1 on $C.$ In \cite{Bano2}, S. del Bano studied motives of moduli spaces of rank 2 vector bundles on $C$ and computed the motive of $M_L.$ In this note, we prove that his result gives an interesting decomposition of the motive of $M_L.$ This motivic decomposition is compatible with a conjecture of M. S. Narasimhan which predicts semi-orthogonal decomposition of derived category of the moduli space.
\end{abstract}
\maketitle

\section{Introduction}

Let $C$ be a smooth projective curve of genus $g \geq 2$ over $\mathbb{C}$ and $M_L$ be the moduli space of rank 2 stable vector bundles on $C$ whose determinants are isomorphic to a fixed line bundle $L$ of degree 1 on $C.$ Let $E$ be the Poincar\'{e} bundle on $C \times M_L.$ We denote by $D(C)$(resp. $D(M_L)$) the bounded derived category of coherent sheaves on $C$(resp. $M_L$). Then $E$ defines a functor 
$$ \Phi_E : D(C) \to D(M_L) $$
by $\Phi_E(F) := {Rp_{M_L}}_*({Lp_C}^*(F) \otimes^L E)$, where $F$ is an element in $D(C)$ and $p_C$(resp. $p_{M_L}$) is the projection map from $C \times M_L$ to $C$(resp. $M_L$). It was proved that $\Phi_E$ is fully faithful for every smooth projective curve of genus greater than or equal to 2 in \cite{BO, FK, Narasimhan1, Narasimhan2}. Moreover the results in \cite{Narasimhan1} imply that there is the following semi-orthogonal decomposition.
$$ D(M_L) = \langle \langle D(pt), D(pt), D(C) \rangle^{\perp}, \langle D(pt), D(pt), D(C) \rangle \rangle $$

It is an interesting and important task to understand the semi-orthogonal component $\langle D(pt), D(pt), D(C) \rangle^{\perp}.$ It was conjectured by M. S. Narasimhan that the derived category of $M_L$ has a semi-orthogonal decomposition consisting of two copies of the derived category of point, two copies of the derived category of $C,$ $\cdots,$ two copies of the derived category of $C^{(g-2)}$ and one copy of the derived category of $C^{(g-1)},$ where $C^{(n)}$ denotes the $n$-th symmetric power of $C.$

\begin{conj}\label{mainconjecture}
The derived category of $M_L$ has the following semi-orthogonal decomposition
$$ D(M_L)=\langle D(pt), D(pt), D(C), D(C), \cdots, D(C^{(g-2)}), D(C^{(g-2)}), D(C^{(g-1)}) \rangle. $$
\end{conj}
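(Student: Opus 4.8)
\noindent\textbf{A strategy for Conjecture~\ref{mainconjecture}.}
Since the conjecture refines the three-term semi-orthogonal decomposition recalled above, the natural plan would be to produce the remaining summands $D(C^{(k)})$ by a sequence of wall-crossings, in the spirit of Thaddeus' proof of the Verlinde formula. Replacing $L$ by $L\otimes N^{2}$ for a sufficiently positive line bundle $N$ --- which does not change $M_L$ up to isomorphism --- we may assume $\deg L$ is a large odd integer. For a real parameter $\tau>0$, let $M_\tau$ be the moduli space of $\tau$-semistable pairs $(V,s)$ with $V$ a rank $2$ bundle on $C$, $\det V\cong L$, and $0\neq s\in H^0(C,V)$. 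As $\tau$ ranges over $(0,\infty)$ there are finitely many critical values $0<\tau_1<\cdots<\tau_N$; below $\tau_1$ the space $M_\tau$ is a projective space, namely $\PP\bigl(\Ext^1(L,\cO_C)\bigr)=\PP\bigl(H^1(C,L\dual)\bigr)$, parametrising extensions $0\to\cO_C\to V\to L\to 0$ with their tautological section, while above $\tau_N$ the forgetful map $M_\tau\to M_L$, $(V,s)\mapsto V$, is a projective bundle with fibre $\PP(H^0(V))$ of constant dimension. The first step is to make both endpoint descriptions precise in the fixed-determinant setting, using the analysis of moduli of pairs due to Thaddeus and Bertram.

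The second step is to analyse each wall-crossing $M_{\tau_i^-}\dashrightarrow M_{\tau_i^+}$. Each should be a standard flip: it replaces a projectivised bundle $\PP(\cA_i)\to Z_i$ over a smooth centre $Z_i$ by another projectivised bundle $\PP(\cB_i)\to Z_i$, where $Z_i$ is itself a projective bundle over a symmetric power $C^{(k_i)}$ of $C$ --- the divisor on which the section degenerates accounting for the $C^{(k_i)}$ factor and the extension data for the projective-bundle direction --- with $k_i$ decreasing along the chain. By the behaviour of semi-orthogonal decompositions under standard flips (Bondal--Orlov, together with later refinements) and the projective bundle formula, crossing the $i$-th wall changes $D(M_\tau)$ by a controlled number of twisted copies of $D(C^{(k_i)})$. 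Running the chain from $\tau<\tau_1$ to $\tau>\tau_N$ while bookkeeping these contributions should yield a semi-orthogonal decomposition of $D(M_\tau)$; removing the copies of $D(M_L)$ that account for the choice of section then leaves the asserted decomposition of $D(M_L)$, with $D(C^{(k)})$ occurring twice for $0\le k\le g-2$ (where $C^{(0)}=pt$) and $D(C^{(g-1)})$ once. One should also check that the first two wall-crossings recover exactly the functor $\Phi_E$ and the two point-components of \cite{Narasimhan1}, so that the outcome refines the known decomposition; when $g=2$, where $M_L$ is an intersection of two quadrics in $\PP^5$, this reduces to the classical computation $D(M_L)=\langle\cO,\cO(1),D(C)\rangle$.

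The step I expect to be the main obstacle is the precise geometry at the walls together with the functoriality of the whole construction. One needs: (i) an exact identification of each flipping centre $Z_i$ and of its (co)normal data, so that the flip is genuinely standard and the Bondal--Orlov package applies with the enlargement of $D(M_\tau)$ being by $D(C^{(k_i)})$-factors of the stated multiplicity; (ii) a proof that the fully faithful functors $D(C^{(k)})\to D(M_\tau)$ arising at different walls remain mutually semi-orthogonal after all subsequent flips --- a compatibility (``windows'') statement about variation of GIT that is not a formal consequence of the individual flip statements; and (iii) control of the loci where the moduli of pairs are singular or non-reduced, which may force a careful choice of $\tau$-generic parameters or an auxiliary blow-up. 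These are precisely the points at which the conjecture is open for general $g$. The motivic decomposition established in this note is a consistency check for any such construction: del Bano's formula \cite{Bano2} shows that the Chow motive $h(M_L)$ decomposes, up to Tate twists, as $2\bigl(\bone\oplus h(C)\oplus\cdots\oplus h(C^{(g-2)})\bigr)\oplus h(C^{(g-1)})$, so whatever semi-orthogonal decomposition the wall-crossings produce must realise exactly these classes --- upgrading this numerical identity to a decomposition of triangulated categories is the content of the conjecture.
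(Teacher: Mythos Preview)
The statement you are addressing is Conjecture~\ref{mainconjecture}, and the paper does \emph{not} prove it: it is stated explicitly as an open conjecture attributed to M.~S.~Narasimhan. The paper's contribution is the motivic decomposition theorem (Theorem~2.4), which is offered only as evidence compatible with the conjecture, not as a proof of it. There is therefore no proof in the paper against which your proposal can be compared.

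Your submission is, as you label it, a strategy rather than a proof, and you are candid about this: you identify the wall-crossing bookkeeping in Thaddeus' variation of moduli of pairs as the intended mechanism, and you single out (i) the precise identification of flipping centres and normal bundles, (ii) the persistence of semi-orthogonality across successive flips, and (iii) singularity control as the unresolved steps. This is an accurate assessment of where the difficulty lies, and it is also the circle of ideas underlying del Ba\~no's motivic computation \cite{Bano2} that the paper invokes --- so your approach is natural and well-aligned with the existing evidence. But a strategy with acknowledged gaps is not a proof: steps (i)--(iii) are genuine mathematical obstacles, not routine verifications, and until they are carried out the conjecture remains open. In short, you have correctly located the problem and sketched the expected architecture of a solution, but neither you nor the paper supplies a proof.
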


Using del Bano's work(cf. \cite{Bano2}) we found that the motive of $M_L$ has the following interesting decomposition. Assuming some conjectures about derived categories and motives (cf. \cite{Orlov}), the following motivic decomposition is compatible with the above conjecture.

\begin{theo}
In any semisimple category of motives, there is the following isomorphism.
$$ h(M_L) \cong \bigoplus_{k=0}^{g-2} h(C^{(k)}) \otimes (\mathbb{L}^{\otimes k} \oplus \mathbb{L}^{\otimes 3g-3-2k}) \oplus h(C^{(g-1)}) \otimes \mathbb{L}^{\otimes g-1}. $$
\end{theo}

The author was informed that P. Belmans, S. Galkin and S. Mukhopadhyay obtained the conjecture \ref{mainconjecture} and a similar stronger formula in the Grothendieck group of integral K-motives independently.

\medskip

It is a pleasure to express author's deepest gratitude to M. S. Narasimhan for sharing his conjecture and many invaluable teachings and encouragements. The author is also very grateful to T. L. Gomez for many helpful discussions and encouragements. He also thanks P. Belmans, C.-H. Cho, S. Galkin, A. Kuznetsov, C. L. Lazaroiu, S. Mukhopadhyay, S. Okawa and K.-D. Park for helpful discussions and suggestions. Part of this work was done while he was a research fellow of KIAS and visiting Indian Institute of Science. He thanks Indian Institute of Science for wonderful working conditions and kind hospitality. He also thanks G. Misra for kind hospitality during his stay in IISc.

\medskip

\noindent\textbf{Notation}. We will follow the notations in \cite{Bano2}.

\section{Motivic decomposition}

We refer \cite{Bano1, Bano2} for notations and backgrounds about motives. In \cite{Bano1}, Bano proved the following motivic decomposition.

\begin{theo}
Let $C^{(n)}$ be the $n$-th symmetric power of $C.$ Then there is the following decomposition.
$$ h(C^{(n)}) = \bigoplus_{a+b+c=n} 1^{\otimes a} \otimes \lambda^b h^1(C) \otimes \mathbb{L}^{\otimes c} $$
where $a,b,c$ are nonnegative integers.
\end{theo}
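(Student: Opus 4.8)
The plan is to realize the symmetric power $C^{(n)}$ as a finite quotient and compute its motive as a symmetric power of $h(C)$. First I would invoke the standard Chow--K\"unneth decomposition of a smooth projective curve,
$$ h(C) = 1 \oplus h^1(C) \oplus \mathbb{L}, $$
in which $1 = h^0(C)$ is the unit motive, $\mathbb{L} = h^2(C)$ is the Lefschetz motive, and $h^1(C)$ is the odd piece (the degree-one motive of the Jacobian). Because we work $\mathbb{Q}$-linearly in a pseudo-abelian symmetric monoidal category (a fortiori in any semisimple one), the motive of the quotient $C^{(n)} = C^n/S_n$ is the image of the symmetrizing projector $\frac{1}{n!}\sum_{\sigma \in S_n}\sigma$ acting on $h(C^n) = h(C)^{\otimes n}$; this is the standard identification $h(X/G) = h(X)^G$ for a finite group acting on a smooth projective variety. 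Hence
$$ h(C^{(n)}) = \mathrm{Sym}^n h(C), $$
where $\mathrm{Sym}^n$ denotes the $n$-th symmetric power taken with respect to the commutativity constraint of the tensor category.

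Next I would expand this symmetric power of a triple direct sum. In any $\mathbb{Q}$-linear symmetric monoidal pseudo-abelian category one has the formula
$$ \mathrm{Sym}^n(A \oplus B \oplus D) = \bigoplus_{a+b+c=n} \mathrm{Sym}^a A \otimes \mathrm{Sym}^b B \otimes \mathrm{Sym}^c D, $$
so applying it to $A = 1$, $B = h^1(C)$, $D = \mathbb{L}$ reduces the problem to identifying the three factors $\mathrm{Sym}^a 1$, $\mathrm{Sym}^b h^1(C)$, and $\mathrm{Sym}^c \mathbb{L}$. The two even factors are immediate: $\mathrm{Sym}^a 1 = 1^{\otimes a}$ and $\mathrm{Sym}^c \mathbb{L} = \mathbb{L}^{\otimes c}$, since the unit and the invertible Lefschetz motive carry the trivial symmetry on their tensor powers.

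The crucial step is the odd factor. Since $h^1(C)$ has odd cohomological weight, the commutativity constraint acts on $h^1(C) \otimes h^1(C)$ by $-1$ (the Koszul sign rule in the super-commutative convention), so the symmetrizing projector on $(h^1(C))^{\otimes b}$ coincides with the \emph{anti}symmetrizing projector. Consequently $\mathrm{Sym}^b h^1(C) = \lambda^b h^1(C)$, the exterior power. Substituting the three identifications into the expansion yields exactly
$$ h(C^{(n)}) = \bigoplus_{a+b+c=n} 1^{\otimes a} \otimes \lambda^b h^1(C) \otimes \mathbb{L}^{\otimes c}. $$
The main obstacle is precisely this sign bookkeeping: one must keep track of the fact that the symmetric monoidal structure on motives uses the super-commutativity constraint under which odd pieces anticommute, for it is exactly this convention that turns the symmetric power of the odd motive $h^1(C)$ into its exterior power $\lambda^b h^1(C)$. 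Working in a semisimple category guarantees that all the projectors above split and that $h(C^{(n)})$ is genuinely the image of the symmetrizer, so no vanishing or convergence subtleties intervene.
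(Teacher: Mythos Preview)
The paper does not prove this statement at all; it merely quotes it from \cite{Bano1} (``In \cite{Bano1}, Bano proved the following motivic decomposition''), so there is no in-paper proof to compare against. Your argument is the standard one and is correct: identify $h(C^{(n)})$ with the $S_n$-invariants of $h(C)^{\otimes n}$, feed in the Chow--K\"unneth splitting $h(C)=1\oplus h^1(C)\oplus\mathbb{L}$, and expand $\mathrm{Sym}^n$ of a triple sum. You also correctly isolate the only nontrivial point, namely that the symmetric power of the odd piece $h^1(C)$ shows up as $\lambda^b h^1(C)$; this is exactly del Ba\~no's convention (his $\lambda$-structure on motives is set up so that realization to cohomology is compatible with the Koszul sign rule, cf.\ \cite{Bano1,Bano2}), and your explanation via the super-commutativity constraint is the right way to see it.
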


In \cite{Bano2}, Bano proved the following motivic decomposition using works of Thaddeus.

\begin{theo}\cite[Corollary 2.7]{Bano2}
In any semisimple category of motives, there is the following isomorphism.
$$ h(M_L) \cong \bigoplus_{k=0}^{g} \lambda^kh^1C \otimes (1 \oplus \mathbb{L} \oplus \cdots \oplus \mathbb{L}^{\otimes g-k-1}) \otimes (1 \oplus \mathbb{L}^{\otimes 2} \oplus \cdots \oplus \mathbb{L}^{\otimes 2g-2k-2}) \otimes \mathbb{L}^{\otimes k}. $$
\end{theo}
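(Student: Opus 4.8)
The plan is to expand \emph{both} sides of the asserted identity as direct sums of the elementary motives $\lambda^{b}h^{1}(C)\otimes\mathbb{L}^{\otimes n}$, and then to check that the two resulting direct sums consist of exactly the same summands, with the same multiplicities. Since this is then an equality of direct sums indexed by one and the same list of objects, it yields the desired isomorphism in any additive category, and in particular in any semisimple category of motives; no further structural input is needed.

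First I would rewrite del Bano's formula \cite[Corollary 2.7]{Bano2}. For $k=g$ the factors $1\oplus\mathbb{L}\oplus\cdots\oplus\mathbb{L}^{\otimes g-k-1}$ and $1\oplus\mathbb{L}^{\otimes 2}\oplus\cdots\oplus\mathbb{L}^{\otimes 2g-2k-2}$ are empty, so in fact $0\le k\le g-1$; and for a fixed such index $b$ the only summand in which $\lambda^{b}h^{1}(C)$ occurs is the one with $k=b$. Expanding it, the multiplicity of $\lambda^{b}h^{1}(C)\otimes\mathbb{L}^{\otimes n}$ in $h(M_{L})$ is the coefficient of $t^{n}$ in
\[
F_{b}(t)\;=\;t^{b}\Bigl(\textstyle\sum_{i=0}^{g-1-b}t^{i}\Bigr)\Bigl(\textstyle\sum_{j=0}^{g-1-b}t^{2j}\Bigr)\;=\;t^{b}\cdot\frac{1-t^{g-b}}{1-t}\cdot\frac{1-t^{2(g-b)}}{1-t^{2}}.
\]
On the other side I would invoke Bano's symmetric power theorem \cite{Bano1}: collecting the summands of $h(C^{(k)})$ of the form $\lambda^{b}h^{1}(C)\otimes\mathbb{L}^{\otimes c}$ gives $\lambda^{b}h^{1}(C)\otimes(1\oplus\mathbb{L}\oplus\cdots\oplus\mathbb{L}^{\otimes k-b})$ when $k\ge b$, and nothing when $k<b$. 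Feeding this into $\bigoplus_{k=0}^{g-2}h(C^{(k)})\otimes(\mathbb{L}^{\otimes k}\oplus\mathbb{L}^{\otimes 3g-3-2k})\oplus h(C^{(g-1)})\otimes\mathbb{L}^{\otimes g-1}$, the multiplicity of $\lambda^{b}h^{1}(C)\otimes\mathbb{L}^{\otimes n}$ on the right-hand side is the coefficient of $t^{n}$ in
\[
G_{b}(t)\;=\;\sum_{k=b}^{g-2}\bigl(t^{k}+t^{3g-3-2k}\bigr)\,\frac{1-t^{k-b+1}}{1-t}\;+\;t^{g-1}\,\frac{1-t^{g-b}}{1-t}.
\]
Since only exponents $0\le b\le g-1$ occur on either side, the theorem is equivalent to the finite family of polynomial identities $F_{b}=G_{b}$, $0\le b\le g-1$.

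Setting $q=g-b\ge1$ and reindexing the sum by $k=b+m$, one finds $G_{b}(t)=t^{b}\bigl(\sum_{m=0}^{q-2}(t^{m}+t^{3q-3-2m})\tfrac{1-t^{m+1}}{1-t}+t^{q-1}\tfrac{1-t^{q}}{1-t}\bigr)$, so $F_{b}=G_{b}$ reduces to the single identity
\[
\frac{1-t^{q}}{1-t}\cdot\frac{1-t^{2q}}{1-t^{2}}\;=\;\sum_{m=0}^{q-2}\bigl(t^{m}+t^{3q-3-2m}\bigr)\,\frac{1-t^{m+1}}{1-t}\;+\;t^{q-1}\,\frac{1-t^{q}}{1-t}\qquad(q\ge1).
\]
To prove it I would clear the denominator $1-t$, move the term $t^{q-1}(1-t^{q})$ to the left-hand side, evaluate the four resulting geometric sums in closed form, and clear the common denominator $1-t^{2}$; using the factorizations $1-t^{2q}=(1-t^{q})(1+t^{q})$ and $1-t^{2q-2}=(1-t^{q-1})(1+t^{q-1})$ together with the elementary identity $(1+t^{q})(1+t)-t(1+t^{q-1})=1+t^{q+1}$, both resulting polynomials collapse to $(1-t^{q})(1-t^{q-1})(1+t^{q+1})$. (For $q=1$ the sum is empty and the identity is just $1=1$.) This finishes the proof.

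The step I expect to be the main obstacle is the bookkeeping behind $G_{b}$: each symmetric power $C^{(k)}$ with $0\le k\le g-2$ contributes \emph{twice}, once for each of the complementary Tate twists $\mathbb{L}^{\otimes k}$ and $\mathbb{L}^{\otimes 3g-3-2k}$, whereas $C^{(g-1)}$ contributes once, with the self-dual twist $\mathbb{L}^{\otimes g-1}$; pinning down exactly these twists and the summation ranges is the one delicate point, after which $F_{b}=G_{b}$ is completely elementary. (These three twists are forced by Poincar\'e self-duality of $h(M_{L})$ in dimension $\dim M_{L}=3g-3$, which is also the reason why the same combinatorics is, via Orlov's conjecture \cite{Orlov}, compatible with Narasimhan's conjectural semiorthogonal decomposition.)
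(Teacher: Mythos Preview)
Your argument does not address the stated theorem. The statement you were given is del Bano's result \cite[Corollary~2.7]{Bano2}, which the paper merely \emph{cites} (``Bano proved the following motivic decomposition using works of Thaddeus'') and does not attempt to prove; its proof lives in \cite{Bano2} and rests on Thaddeus' flip geometry of the moduli space, not on any comparison with symmetric powers of $C$. What you have actually written is a proof of the paper's \emph{next} theorem,
\[
h(M_{L})\ \cong\ \bigoplus_{k=0}^{g-2}h(C^{(k)})\otimes\bigl(\mathbb{L}^{\otimes k}\oplus\mathbb{L}^{\otimes 3g-3-2k}\bigr)\ \oplus\ h(C^{(g-1)})\otimes\mathbb{L}^{\otimes g-1},
\]
taking del Bano's formula as \emph{input} rather than as the conclusion to be established. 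Your ``two sides'' are del Bano's expression and the symmetric-power expression, not $h(M_{L})$ and del Bano's expression.

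For that main theorem your approach coincides with the paper's: both isolate the coefficient of $\lambda^{i}h^{1}C$ on each side, reindex, and reduce everything to one polynomial identity. After setting $m=q-1$ and expanding $\tfrac{1-t^{j+1}}{1-t}=\sum_{c=0}^{j}t^{c}$, your displayed identity in $q$ becomes exactly the paper's
\[
\sum_{j=0}^{m-1}\sum_{c=0}^{j}\bigl(x^{j+c}+x^{3m-2j+c}\bigr)+\sum_{c=0}^{m}x^{m+c}\ =\ \frac{1-x^{m+1}}{1-x}\cdot\frac{1-x^{2m+2}}{1-x^{2}}.
\]
The only difference is that the paper simply asserts this identity holds for every $m\ge 1$, whereas you actually carry out the verification.
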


The main observation of this note is the following.

\begin{theo}
In any semisimple category of motives, there is the following isomorphism.
$$ h(M_L) \cong \bigoplus_{k=0}^{g-2} h(C^{(k)}) \otimes (\mathbb{L}^{\otimes k} \oplus \mathbb{L}^{\otimes 3g-3-2k}) \oplus h(C^{(g-1)}) \otimes \mathbb{L}^{\otimes g-1}. $$
\end{theo}
\begin{proof}
It is enough to compare 
$$ \bigoplus_{k=0}^{g} \lambda^kh^1C \otimes (1 \oplus \mathbb{L} \oplus \cdots \oplus \mathbb{L}^{\otimes g-k-1}) \otimes (1 \oplus \mathbb{L}^{\otimes 2} \oplus \cdots \oplus \mathbb{L}^{\otimes 2g-2k-2}) \otimes \mathbb{L}^{\otimes k} $$
and
$$ \bigoplus_{k=0}^{g-2} h(C^{(k)}) \otimes (\mathbb{L}^{\otimes k} \oplus \mathbb{L}^{\otimes 3g-3-2k}) \oplus h(C^{(g-1)}) \otimes \mathbb{L}^{\otimes g-1}. $$
Then it is enough to compare the coefficients of $\lambda^ih^1C$ for both sides. It is easy to see that the coefficient of $\lambda^ih^1C$ is 
$$ \bigoplus_{k=i}^{g-2} \bigoplus_{a+c=k-i} (1^{\otimes a} \otimes \lambda^i h^1C \otimes \mathbb{L}^{\otimes c}) \otimes (\mathbb{L}^{\otimes k} \oplus \mathbb{L}^{\otimes 3g-3-2k}) \oplus \bigoplus_{a+c=g-1-i} (1^{\otimes a} \otimes \lambda^i h^1C \otimes \mathbb{L}^{\otimes c}) \otimes \mathbb{L}^{\otimes g-1} $$
$$ = \lambda^i h^1C \otimes [ \bigoplus_{k=i}^{g-2} \bigoplus_{a+c=k-i} (1^{\otimes a} \otimes \mathbb{L}^{\otimes c}) \otimes (\mathbb{L}^{\otimes k} \oplus \mathbb{L}^{\otimes 3g-3-2k}) \oplus \bigoplus_{a+c=g-1-i} (1^{\otimes a} \otimes \mathbb{L}^{\otimes c}) \otimes \mathbb{L}^{\otimes g-1} ] $$
$$ = \lambda^i h^1C \otimes [ \bigoplus_{k=i}^{g-2} \bigoplus_{c=0}^{k-i} (\mathbb{L}^{\otimes k+c} \oplus \mathbb{L}^{\otimes 3g-3-2k+c}) \oplus \bigoplus_{c=0}^{g-1-i}  \mathbb{L}^{\otimes g-1+c} ] $$
$$ = \lambda^i h^1C \otimes [ \bigoplus_{k=i}^{g-2} \bigoplus_{c=0}^{k-i} (\mathbb{L}^{\otimes k-i+c} \oplus \mathbb{L}^{\otimes 3g-3-3i-2(k-i)+c}) \oplus \bigoplus_{c=0}^{g-1-i}  \mathbb{L}^{\otimes g-1-i+c} ] \otimes \mathbb{L}^{\otimes i} $$
$$ = \lambda^i h^1C \otimes [ \bigoplus_{k-i=0}^{g-1-i-1} \bigoplus_{c=0}^{k-i} (\mathbb{L}^{\otimes k-i+c} \oplus \mathbb{L}^{\otimes 3g-3-3i-2(k-i)+c}) \oplus \bigoplus_{c=0}^{g-1-i}  \mathbb{L}^{\otimes g-1-i+c} ] \otimes \mathbb{L}^{\otimes i} $$
$$ = \lambda^i h^1C \otimes [ \bigoplus_{j=0}^{g-1-i-1} \bigoplus_{c=0}^{j} (\mathbb{L}^{\otimes j+c} \oplus \mathbb{L}^{\otimes 3g-3-3i-2j+c}) \oplus \bigoplus_{c=0}^{g-1-i}  \mathbb{L}^{\otimes g-1-i+c} ] \otimes \mathbb{L}^{\otimes i}. $$
Therefore it remains to show that 
$$ \bigoplus_{j=0}^{g-1-i-1} \bigoplus_{c=0}^{j} (\mathbb{L}^{\otimes j+c} \oplus \mathbb{L}^{\otimes 3g-3-3i-2j+c}) \oplus \bigoplus_{c=0}^{g-1-i}  \mathbb{L}^{\otimes g-1-i+c} $$
is equal to 
$$ (1 \oplus \mathbb{L} \oplus \cdots \oplus \mathbb{L}^{\otimes g-1-i}) \otimes (1 \oplus \mathbb{L}^{\otimes 2} \oplus \cdots \oplus \mathbb{L}^{\otimes 2g-2-2i}). $$
It follows from the following equality which holds for every $m \geq 1$ where $x$ is a formal variable.
$$ \sum_{j=0}^{m-1}\sum_{c=0}^{j}(x^{j+c}+x^{3m-2j+c}) + \sum_{c=0}^{m}x^{m+c} = \frac{1-x^{m+1}}{1-x} \cdot \frac{1-x^{2m+2}}{1-x^2}. $$
Therefore we get the desired result.
 \end{proof}

\begin{rema}
The above motivic decomposition implies an interesting decomposition of Hodge diamond of $M_L.$ 
\end{rema}

It was also suggested by M. S. Narasimhan that the conjectural semi-orthogonal decomposition of derived category of $M_L$ has a meaning in the homological mirror symmetry. Assuming some conjectures, it seems to imply an interesting decomposition of the (Karoubi completed) Fukaya category of $M_L.$ We have the following vague conjecture.

\begin{conj}
The Fukaya category of $M_L$ has the following orthogonal decomposition.
$$ \overline{Fuk(M_L)}=\langle \overline{Fuk(pt)}, \overline{Fuk(pt)}, \overline{Fuk(C)}, \overline{Fuk(C)}, \cdots, \overline{Fuk(C^{(g-2)})}, \overline{Fuk(C^{(g-2)})}, \overline{Fuk(C^{(g-1)})} \rangle $$
\end{conj}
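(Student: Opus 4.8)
The plan is to deduce this orthogonal decomposition from the conjectural semiorthogonal decomposition of $D(M_L)$ in Conjecture \ref{mainconjecture} by transporting it across homological mirror symmetry. The first step is to record the relevant geometry: $M_L$ is a smooth Fano variety of complex dimension $3g-3$, and I would fix a monotone symplectic form in its anticanonical class so that $\overline{Fuk(M_L)}$ is the Karoubi-completed monotone Fukaya category, which splits orthogonally over the generalized eigenvalues of quantum multiplication by $c_1(M_L)$. The mirror-symmetry input I would assume is an $A_\infty$-equivalence between $D(M_L)$ and $\overline{Fuk(M_L)}$ that intertwines the Fourier--Mukai functor $\Phi_E$ and its symmetric-power analogues (the functors building the decomposition of Conjecture \ref{mainconjecture}) with their Lagrangian-correspondence counterparts on the $A$-side.

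The key structural step is to explain why a \emph{semiorthogonal} decomposition on the $B$-side must become a genuinely \emph{orthogonal} one on the $A$-side, so that the angle-bracket decomposition of Conjecture \ref{mainconjecture} produces the bracketed (but in fact direct-sum) decomposition asserted here. In the compact Fukaya category, Floer cohomology satisfies Poincar\'e duality,
$$ HF^{\ast}(L_0,L_1) \cong HF^{\,n-\ast}(L_1,L_0)^{\vee}, \qquad n=\dim_{\mathbb{R}} M_L, $$
so the morphism space between two admissible subcategories vanishes in one direction exactly when it vanishes in the other. Hence any semiorthogonal decomposition whose pieces are mirror to the blocks of Conjecture \ref{mainconjecture} collapses to an orthogonal one, which is precisely the shape of the claimed statement.

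Next I would identify the summands. Under HMS one expects $D(pt)$ to correspond to $\overline{Fuk(pt)}$ and, crucially, each admissible subcategory $D(C^{(k)}) \hookrightarrow D(M_L)$ in Conjecture \ref{mainconjecture} to be mirror to an embedding $\overline{Fuk(C^{(k)})} \hookrightarrow \overline{Fuk(M_L)}$; establishing this requires mirror symmetry for the symmetric products $C^{(k)}$ together with a compatibility of the embedding functors with Lagrangian correspondences. The motivic identity of the Theorem above provides the additive shadow of this statement: matching $h(M_L)$ with $\bigoplus_{k=0}^{g-2} h(C^{(k)}) \otimes (\mathbb{L}^{\otimes k} \oplus \mathbb{L}^{\otimes 3g-3-2k}) \oplus h(C^{(g-1)}) \otimes \mathbb{L}^{\otimes g-1}$ forces the Hochschild homology of $\overline{Fuk(M_L)}$ to split as the corresponding sum, a necessary numerical consistency check for the conjectured decomposition.

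\textbf{The main obstacle} is that homological mirror symmetry for $M_L$ is not available: neither the construction of the mirror nor the functorial compatibility of $\Phi_E$ with a Lagrangian correspondence is known, and the curvature term does not vanish for the monotone Fano $M_L$, so one must work eigenvalue-by-eigenvalue and assemble the target category with care. A purely symplectic route---constructing the Lagrangian embeddings mirror to $C^{(k)}$ directly and verifying orthogonality of their Floer complexes---would bypass full HMS but appears at least as hard. For these reasons the statement remains, as indicated, conjectural: the motivic decomposition together with Conjecture \ref{mainconjecture} supplies strong compatibility evidence, while a genuine proof would require substantial new input from the symplectic side.
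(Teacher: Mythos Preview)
The statement you address is labeled a \emph{conjecture} in the paper, and the paper supplies no proof whatsoever: it only remarks that the decomposition is ``compatible with results or conjectures in \cite{KKOY, Munoz, Smith}'' after loosely motivating it from Conjecture~\ref{mainconjecture} via unspecified mirror-symmetry assumptions. So there is no argument in the paper to compare against, and you correctly conclude that the statement remains conjectural.

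Your outline is in fact more detailed than anything the paper offers, and the structural point---that Poincar\'e duality in the compact Fukaya category forces any semiorthogonal decomposition to collapse to an orthogonal one---is correct and is precisely the mechanism one would expect to account for the change from ``semiorthogonal'' in Conjecture~\ref{mainconjecture} to ``orthogonal'' here. One caution on your framing, though: the input you posit, an $A_\infty$-equivalence $D(M_L)\simeq\overline{Fuk(M_L)}$, is not the standard shape of homological mirror symmetry for a Fano variety. Ordinarily $D^b(M_L)$ is matched with the Fukaya--Seidel category of a mirror Landau--Ginzburg model, while $Fuk(M_L)$ corresponds to matrix factorisations (or a singularity category) on the mirror side; a direct equivalence between $D(M_L)$ and $Fuk(M_L)$ would in any case be obstructed by the very Calabi--Yau duality you invoke, since $D(M_L)$ admits genuinely non-orthogonal semiorthogonal decompositions. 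The route from Conjecture~\ref{mainconjecture} to the Fukaya statement therefore cannot be a single transport across an equivalence, but would have to pass through the eigenvalue splitting you mention together with a block-by-block comparison. This does not undermine your conclusion---it only sharpens where the real difficulty lies.
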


This conjecture is compatible with results and conjectures in \cite{KKOY, Munoz, Smith}.

\bibliographystyle{amsplain}

\end{document}